\documentclass[11pt]{article}
\usepackage[ansinew]{inputenc}
\usepackage{array}
\usepackage{color}
\usepackage{amsmath,amsthm,amscd}
\usepackage{amsxtra}
\usepackage{amstext}
\usepackage{amssymb}
\usepackage{latexsym}
\usepackage{amsfonts,cite}
\usepackage{graphics,epstopdf}
\usepackage{epsfig}
\setcounter{MaxMatrixCols}{30}
\usepackage{amsfonts}
\usepackage{graphicx}
\usepackage{tcolorbox}
\usepackage{array,tabularx,colortbl}
\usepackage[framemethod=TikZ]{mdframed}

\providecommand{\U}[1]{\protect\rule{.1in}{.1in}}

\topmargin=-5mm
\textwidth=160mm
\textheight=215mm
\evensidemargin=-0.6cm
\oddsidemargin=0mm

\begin{document}
	
	\sloppy
	\newtheorem{thm}{Theorem}
	\newtheorem{example}{Example}
	\newtheorem{cor}{Corollary}
	\newtheorem{lem}{Lemma}
	\newtheorem{prop}{Proposition}
	\newtheorem{eg}{Example}
	\newtheorem{obs}{Observation}
	\newtheorem{defn}{Definition}
	\newtheorem{rem}{Remark}
	\numberwithin{equation}{section}
	
	\thispagestyle{empty}
	\parindent=0mm
	
	\begin{center}
		{\Large \textbf{Unveiling new perspectives of hypergeometric functions\\
				\vspace{0.2cm}
				 using umbral techniques}}
		\vspace{.20cm}\\~~	
		{\bf Giuseppe Dattoli$^{1}$, Mehnaz Haneef$^{2}$, Subuhi Khan$^{3}$ and  Silvia Licciardi$^{4}$}\\
		\vspace{0.15cm}
		
	$^{1}$ \quad ENEA---Frascati Research Center, Via Enrico Fermi 45, 00044 Rome, Italy;  \\
$^{2,3}$ \quad Department of Mathematics, Aligarh Muslim University, Aligarh, India;  \\
$^{4}$ \quad University of Palermo, Department of Engineering, Palermo, Italy. \\
	\footnote{$^{1}$E-mail:~pinodattoli@libero.it (Giuseppe Dattoli)}
	\footnote{$^{2}$E-mail:~mehnaz272@gmail.com (Mehnaz Haneef)}
	\footnote{$^{3}$E-mail:~subuhi2006@gmail.com (Subuhi Khan)(Corresponding author)}
	\footnote{$^{4}$E-mail:~silviakant@gmail.com (Silvia Licciardi)}
		\end{center}
	
	\begin{abstract}
		\noindent The umbral restyling of hypergeometric functions is shown to be a useful and efficient approach in simplifying the associated computational technicalities. In this article, the authors provide a general introduction to the umbral version of Gauss hypergeometric functions and extend the formalism to certain generalized forms of these functions. It is shown that suggested approach is particularly efficient for evaluating integrals involving hypergeometric functions and their combination with other special functions.
	\end{abstract}
	\parindent=0mm
	\vspace{.25cm}
	
	\noindent
	\textbf{Key Words.}~~{Umbral methods; Bessel functions; Hypergeometric functions; Gaussian functions; Integral representations.}
	
	\vspace{0.25cm}
	\noindent
		\textbf{2020 Mathematics Subject Classification.}~~05A40, 44A99, 33B10, 33C10, 33C20, 33C70, 33C05, 97I50.

	\section{Introduction}
	The employment of umbral approach has been demonstrated to be a useful tool for dealing with the unified handling of special and non-special functions \cite{SLicciardi, Germano, ML, our, our2}. The philosophy that underpins umbral formalism is summarised as follows: a collection of operators is designed to introduce the image function of another function with unknown attributes, known as the object function. The image function is chosen in such a way that the relevant (known) properties are used to determine the counterpart attributes of the object function. The aforementioned objective is achieved by employing the principle of permanence of formal properties proposed in \cite{OpervsUM} and previously discussed in \cite{Peacock,Peacock2}.\\
	
	In this article, we apply the method to the theory of hypergeometric functions. The Gauss hypergeometric function is defined by the series \cite{Andrews}
	\begin{equation}\label{eq1}
		{_2{F}}_1[a,b;c;x]=\sum_{r=0}^{\infty}\frac{(a)_r(b)_r}{(c)_r}\frac{x^r}{r!},
	\end{equation} 
	where $(d)_r$ denotes the Pochhammer symbol defined by
	\begin{equation}\label{eq2}
		(d)_r=\frac{\Gamma(d+r)}{\Gamma(d)}
    \end{equation}
	and satisfies the umbral type identities, as for example \cite{Poch}:
	\begin{equation}\label{eq3}
		(k+d)_r=\sum_{s=0}^{r}\binom{r}{s}(d)_{r-s}\,(k)_s.
	\end{equation}
The umbral proof of equation \eqref{eq3} is easily achieved.
We remind that, according to the prescription considered in \cite{SLicciardi}, the following
definition can be adopted:
\begin{equation}\label{eq3a}
	\hat{s}^r\,\varphi_0=(s)_r\,,
\end{equation}
where $\hat{s}$ is an umbral operator and $\varphi_0$ is the corresponding vacuum. We can cast equation \eqref{eq3} in the form of an ordinary binomial as follows:
\begin{equation}\label{eq3b}
	(\hat{k}+\hat{d})^r\,\varphi_{0,k}\,\varphi_{0,d}=\sum_{s=0}^{r}\binom{r}{s}\hat{d}^{r-s}\hat{k}^s\,\varphi_{0,k}\,\varphi_{0,d}\,,
\end{equation}
which in view of equation \eqref{eq3a} yields equation \eqref{eq3}.\\

The Gauss hypergeometric family is characterized by the left and right indices $(p,q)$, which show the number of Pochhammer terms in the numerator and denominator, respectively. These indices specify the order of hypergeometric functions and consequently higher and lower order families can be formed. Accordingly, the hypergeometric function is reformed by making use of umbral operators,
which allow a convenient definition of its image function, realized through the following exponential function:
        \begin{equation}\label{eq4}
		{_2{F}}_1[a,b;c;x]=e^{{_2\hat{\chi}}_1 x}\phi_0,
		\end{equation}
where ${_2\hat{\chi}}_1 $ is an ad hoc umbral operator introduced to realize an elementary image function, which acts on the umbral vacuum $\phi_0$ such that
\begin{equation}\label{eq5}
{{_2\hat{\chi}}_1}^r\phi_0=\frac{(a)_r(b)_r}{(c)_r}
\end{equation}
and satisfies the following property:
\begin{equation}\label{eq6}
		{{_2\hat{\chi}}_1}^r{{_2\hat{\chi}}_1}^s\,\phi_0	= {{_2\hat{\chi}}_1}^{r+s}\,\phi_0=\frac{(a)_{r+s}(b)_{r+s}}{(c)_{r+s}}.
\end{equation}	

	\begin{obs}
	The umbral notation used in equation \eqref{eq4}, needs a clarification. The umbral operator ${}_2\hat{\chi}_1$ 
	is the shift operator such that
	\begin{equation}\label{4a}
		{_2\hat{\chi}}_1 = \frac{e^{\partial_{z_1}}\,e^{\partial_{z_2}}}{e^{\partial_{z_3}}},
	\end{equation}
	with $z_1,z_2,z_3$ as the domain's variables of the function on which the operator acts. The function $\phi_0$ is called ``vacuum" and should be specified by the following function:
		\begin{equation}\label{4b}
\phi(a,z_1, b, z_2,c ,z_3)=\frac{(a)_{z_1}\,(b)_{z_2}}{(c)_{z_3}}.	
		\end{equation}
	Finally, the action of the umbral operator on the vacuum is specified by
	\begin{equation}\label{4c}
	\frac{e^{n \partial_{z_1}}\,e^{n \partial_{z_2}}}{e^{n \partial_{z_3}}}\,\phi(a,z_1, b, z_2,c ,z_3)|_{z_1=z_2=z_3=0}=\frac{(a)_n\,(b)_n}{(c)_n}.
\end{equation}
\end{obs}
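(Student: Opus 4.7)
The plan is to verify equation \eqref{4c} by direct unpacking of the standard shift-operator identity $e^{\alpha\partial_{x}}f(x)=f(x+\alpha)$, applied to each of the three independent variables $z_1, z_2, z_3$ in turn. I would first note that since $\partial_{z_1},\partial_{z_2},\partial_{z_3}$ act on mutually independent coordinates, they pairwise commute, and this makes the expression $\frac{e^{n\partial_{z_1}}e^{n\partial_{z_2}}}{e^{n\partial_{z_3}}}$ a well-defined operator when applied to the multiplicatively separable vacuum $\phi(a,z_1,b,z_2,c,z_3)$ introduced in \eqref{4b}.

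Next, I would apply $e^{n\partial_{z_1}}$ and $e^{n\partial_{z_2}}$ to the numerator of $\phi$, using the analyticity of $(d)_{z}=\Gamma(d+z)/\Gamma(d)$ in $z$ to justify the Taylor expansion on which the shift identity rests. This step transforms $(a)_{z_1}(b)_{z_2}$ into $(a)_{z_1+n}(b)_{z_2+n}$. I would then read the fraction bar in the operator as the structural counterpart of the fraction bar in $\phi$, so that $e^{n\partial_{z_3}}$ in the denominator shifts the Pochhammer factor $(c)_{z_3}$ sitting in the denominator of $\phi$, producing $(c)_{z_3+n}$. Assembling these pieces gives
\begin{equation*}
\frac{e^{n\partial_{z_1}}e^{n\partial_{z_2}}}{e^{n\partial_{z_3}}}\,\phi(a,z_1,b,z_2,c,z_3)=\frac{(a)_{z_1+n}(b)_{z_2+n}}{(c)_{z_3+n}},
\end{equation*}
and specializing $z_1=z_2=z_3=0$ collapses the right-hand side to $(a)_n(b)_n/(c)_n$, matching the defining prescription \eqref{eq5}.

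The main obstacle I anticipate is conceptual rather than computational: one must fix the convention under which the fraction of exponential operators is to be read. A naive interpretation that absorbs the denominator as an inverse operator $e^{-n\partial_{z_3}}$ acting on the whole of $\phi$ would shift $z_3$ in the wrong direction and return $(c)_{-n}$ at the origin, which is not the desired outcome. The correct reading, which must be made explicit, pairs the numerator shift operators with the numerator factors of the vacuum and the denominator shift operator with its denominator factor; the multiplicative separability of $\phi$ together with the commutativity of the partial derivatives ensures that this structural convention is unambiguous. Once this interpretation is adopted, the semigroup property \eqref{eq6} follows immediately from the additivity of shifts along each axis, $e^{r\partial_{z}}e^{s\partial_{z}}=e^{(r+s)\partial_{z}}$, applied coordinate-wise.
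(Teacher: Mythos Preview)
The paper does not supply a separate proof of this Observation: equations \eqref{4a}--\eqref{4c} are stated as definitional clarifications of the umbral notation, with \eqref{4c} simply declared as the intended action of the operator on the vacuum. Your verification via the Taylor shift identity $e^{\alpha\partial_x}f(x)=f(x+\alpha)$, applied coordinate by coordinate, therefore goes beyond what the paper itself provides and is the natural way to unpack the formula.

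You are also right to flag the interpretational issue with the fractional operator notation. Read literally as the composite $e^{n\partial_{z_1}}e^{n\partial_{z_2}}e^{-n\partial_{z_3}}$ acting on the whole function $\phi$, the shift in $z_3$ goes the wrong way and one lands on $(c)_{-n}$ in the denominator rather than $(c)_n$. The paper does not comment on this; it simply asserts the outcome \eqref{4c} and moves on. Your structural convention---pairing the denominator shift with the denominator Pochhammer factor of the separable vacuum---is one consistent way to reconcile \eqref{4a} with \eqref{4c}, and is presumably what the authors have in mind, though they leave it implicit. In short, your proposal is correct and, on this point, more careful than the paper's own treatment.
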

\begin{rem}
We simplify the notations, as reported below
\begin{align}\label{eq4d}
	{_2\hat{\chi}}_1 &\rightarrow \hat{\chi},\\
	\phi(a,z_1, b, z_2,c ,z_3)|_{z_1=z_2=z_3=0}	&\rightarrow \phi_0.
\end{align}
\end{rem}
\noindent In view of the above procedure, the exponential in equation \eqref{eq4} is expanded as
	\begin{equation}\label{eq7}
		e^{\hat{\chi}x}\phi_0=\sum_{r=0}^{\infty}\frac{{\hat{\chi}}^r}{r!}\,x^r \phi_0=\sum_{r=0}^{\infty}\frac{(a)_r(b)_r}{(c)_r}\frac{x^r}{r!}.
	\end{equation}
It is also evident that the identities of type \eqref{eq3} are easily framed within the umbral formalism leading to identity \eqref{eq7}.
The hypergeometric function is therefore restyled as an elementary exponential function. \\

The analysis of the characteristics of  ${_2{F}}_1[a,b;c;x]$ under the signs of derivation and integration is an intriguing application of the approach outlined in the introductory session. After these few opening observations, we consider a more comprehensive examination of applications of the method in forthcoming sections.

\section{Integrals involving hypergeometric and Gaussian functions}	
Before entering the main part of this section and in order to understand the framework of the formalism, following examples are considered:

\begin{example}
Taking the derivative of equation \eqref{eq4} w.r.t. $x$ while treating $\hat{\chi}$ as an ordinary algebraic entity, we find
\begin{equation}\label{eq8}
	\frac{d}{dx}\,	{_2{F}}_1[a,b;c;x]=\hat{\chi}e^{\hat{\chi}x}\phi_0=\hat{\chi}\sum_{r=0}^{\infty}\frac{{\hat{\chi}}^r}{r!}\,x^r \phi_0=\sum_{r=0}^{\infty}\frac{(a)_{r+1}(b)_{r+1}}{(c)_{r+1}}\frac{x^r}{r!},
\end{equation}
which, in view of the identity
\begin{equation}\label{eq9}
	(d)_{n+m}=(d)_m\,(d+m)_n,
\end{equation}
yields the well known recurrence formula
\begin{equation}\label{eq10}
	\frac{d}{dx}\,	{_2{F}}_1[a,b;c;x]=\frac{ab}{c}\,{_2{F}}_1(a+1,b+1;c+1;x).
\end{equation}	
Similarly, by using the following identity:
\begin{equation}\label{eq11}
	(d)_{-r}=\frac{(-1)^r}{(1-d)_r},
\end{equation}
it follows that
\begin{equation*}
	\int \, {_2{F}}_1[a,b;c;x]\,dx = \int e^{\hat{\chi}x}dx\;\phi_0 = {\hat{\chi}}^{-1}e^{\hat{\chi}x}\phi_0.
\end{equation*}	
Consequently, the following integral is obtained:
\begin{equation}\label{eq12}
	\int \, {_2{F}}_1[a,b;c;x]\,dx =\frac{(c-1)}{(a-1)(b-1)}\,{_2{F}}_1[a-1,b-1;c-1;x].
\end{equation}
\end{example}

In order to demonstrate the validity of the umbral formalism, we explore another well known, but less trivial example. 

\begin{example}
Consider the evaluation of following integral \cite{Abramowitz}:
\begin{equation}\label{eq13}
	I(a,b;c;\nu)=\int_{0}^{\infty}t^{\nu -1}\,{_2{F}}_1[a,b;c;-t]\,dt.
\end{equation}	
	In view of equation \eqref{eq4}, the r.h.s. of above equation can be expressed in the following form:
	\begin{equation}\label{eq14}
	\int_{0}^{\infty}t^{\nu -1}\,{_2{F}}_1[a,b;c;-t]\,dt=	\int_{0}^{\infty}t^{\nu -1}e^{-\hat{\chi}t}dt\;\phi_0, \quad 0 < \textrm{Re}(\nu) < \textrm{min}({\textrm{Re}(a), \textrm{Re}(b)}).
	\end{equation}
	Making use of integral representation of Euler Gamma function in the r.h.s. of equation \eqref{eq14}, we have
		\begin{equation}\label{eq15}
		\int_{0}^{\infty}t^{\nu -1}e^{-\hat{\chi}t}dt\,\phi_0=\Gamma{(\nu)}{\hat{\chi}}^{-\nu}\phi_0,
	\end{equation}
	which on using equation \eqref{eq5}, yields
	\begin{equation}\label{eq16}
	I(a,b;c;\nu)=\int_{0}^{\infty}t^{\nu -1}\,{_2{F}}_1[a,b;c;-t]\,dt=\Gamma{(\nu)}	\frac{(a)_{-\nu}(b)_{-\nu}}{(c)_{-\nu}}.
	\end{equation}
The above integral is identical to the conclusion provided in reference \cite{Abramowitz}, where the criteria for convergence of the integral is also described.
\end{example}

A further case, seemingly undisclosed in the literature, is provided in the following example:

\begin{example}
Consider the evaluation of following integral:
	\begin{equation}\label{eq17}
	I(a,b;c;\mu;\nu)=	\int_{0}^{\infty}t^{\nu -1}\,{_2{F}}_1[a,b;c;-t^\mu]\,dt.
	\end{equation}	
Using equation \eqref{eq4} in the r.h.s. of above integral, we have
\begin{equation}\label{eq18}
		\int_{0}^{\infty}t^{\nu -1}\,{_2{F}}_1[a,b;c;-t^\mu]\,dt=\int_{0}^{\infty}t^{\nu -1}e^{-\hat{\chi}t^\mu}dt\,\phi_0.
\end{equation}
In view of the following identity:
\begin{equation*}
	\int_{0}^{\infty}t^{\nu -1}e^{-\alpha t^\mu}\,dt=\frac{1}{\mu}\Gamma{\left(\frac{\nu }{\mu}\right)}{\alpha}^{-\left(\frac{\nu }{\mu}\right)},
\end{equation*}	
the r.h.s. of equation \eqref{eq18} simplifies to
\begin{equation}\label{eq19}
\int_{0}^{\infty}t^{\nu -1}e^{-\hat{\chi}t^\mu}dt\;\phi_0=\frac{1}{\mu}\Gamma{\left(\frac{\nu }{\mu}\right)}{\hat{\chi}}^{-\left(\frac{\nu }{\mu}\right)}\phi_0.
\end{equation}
Finally, using equation \eqref{eq5} in the r.h.s. of above equation, we get
\begin{equation}\label{eq20}
I(a,b;c;\mu;\nu)
=	\int_{0}^{\infty}t^{\nu -1}\,{_2{F}}_1[a,b;c;-t^\mu]\,dt=
\frac{1}{\mu}\Gamma{\left(\frac{\nu }{\mu}\right)}\frac{(a)_{-\left(\frac{\nu }{\mu}\right)} \, (b)_{-\left(\frac{\nu }{\mu}\right)}}{(c)_{-\left(\frac{\nu }{\mu}\right)}}.
\end{equation}
\end{example}

As further illustrations of the umbral formalism, we consider the following extension of integral given in equation \eqref{eq12}:

	\begin{equation}\label{eq21}
		\int x^\alpha\,{_2{F}}_1[a,b;c;x]\,dx =\int x^\alpha e^{\hat{\chi}x}\phi_0\,dx=x^{\alpha +1}\sum_{r=0}^{\infty}\frac{{\hat{\chi}}^rx^r}{r!(\alpha +r +1)} \phi_0.
	\end{equation}
Making use of the identity
\begin{equation}\label{eq22}
	\frac{1}{(\alpha+r+1)}=\frac{\Gamma{(\alpha+r+1)}}{\Gamma{(\alpha+r+2)}}=\frac{\Gamma{(\alpha+1)}(\alpha+1)_r}{\Gamma{(\alpha+2)}(\alpha+2)_r}.
\end{equation}	
Simplifying the r.h.s. of equation \eqref{eq21}, we have
	\begin{equation}\label{eq23}
		\int x^\alpha\,{_2{F}}_1[a,b;c;x]\,dx =\frac{x^{\alpha+1}}{\alpha+1}\sum_{r=0}^{\infty}\frac{x^r(\alpha+1)_r}{r!\,(\alpha+2)_r}{\hat{\chi}}^r\phi_0,
	\end{equation}
which, in view of equation \eqref{eq5}, gives
		\begin{equation}\label{eq24}
		\int x^\alpha\,{_2{F}}_1[a,b;c;x]\,dx =\frac{x^{\alpha+1}}{\alpha+1}\sum_{r=0}^{\infty}\dfrac{x^r}{r!}\frac{(a)_r\,(b)_r\, (\alpha+1)_r}{(c)_r\,(\alpha+2)_r}.
	\end{equation}
	Consequently, we get
		\begin{equation}\label{eq25}
		\int x^\alpha\,{_2{F}}_1[a,b;c;x]\,dx =\frac{x^{\alpha+1}}{\alpha+1}\,{_3{F}}_2[a,b,\alpha+1;c,\alpha+2;x].
		\end{equation}
	
	Taking into account the result obtained in equation \eqref{eq25}, we conclude the following important features:\\
	
	{\bf (i)} The hypergeometric function in r.h.s. of equation \eqref{eq25} has left and right indices $(3,2)$ that are distinct from those that characterize the Gauss forms. Hence, we introduce the $(p,q)$ extension, which is described by the following series:
	\begin{equation}\label{eq26}
		{}_p{F}_q[a_1,a_2,...,a_p;b_1,b_2,...,b_q;x]=\sum_{r=0}^{\infty}\frac{[a_p]_r}{[b_q]_r}\frac{x^r}{r!},
	\end{equation}
	where
	\begin{equation}\label{eq27}
		[d_m]_r=\prod\limits_{k=1}^{m}(d_k)_r\,,
	\end{equation}
having non-zero radius of convergence for $p>q+1.$\\
	
{\bf (ii)} The hypergeometric functions with left and right indices $(0,1), (1,0)$ and $(1,1)$ can also be defined accordingly.\\

{\bf (iii)} The image function of hypergeometric function of higher order can be realized by a lower order geometric function, for example
\begin{equation}\label{eq28}
	{_3{F}}_2[a,b,\alpha;c,\alpha+1;x]={_1{F}}_1[\alpha;\alpha+1;\hat{\chi}x]\phi_0.
\end{equation}
This is also intriguing as it provides an additional extension of the proposed formalism.\\

In order to provide further extension, let us now consider the following integral:
\begin{equation}\label{eq29}
	\int x^\alpha \, e^{-x}\,{_2{F}}_1[a,b;c;x]\,dx =	\int x^\alpha \,e^{-x} \,e^{\hat{\chi}x}\,dx\,\phi_0, \qquad 0<x<1.
\end{equation}

Expanding the r.h.s. of equation \eqref{eq29} and simplifying, it follows that:
\begin{equation}\label{eq29a}
\begin{split}
\int x^\alpha \,e^{-x} \,e^{\hat{\chi}x}dx\;\phi_0&=\sum_{r=0}^{\infty}\frac{(-1)^r}{r!}\int x^{\alpha + r}(1-\hat{\chi})^rdx\;\phi_0\\
&=\frac{x^{\alpha+1}}{\alpha + 1}\sum_{r=0}^{\infty}\biggl((-1)^r\frac{(\alpha + 1)_r\,x^r}{(\alpha + 2)_r\,r!}\biggr)\sum_{s=0}^{r}(-1)^s\binom{r}{s}\hat{\chi}^s\phi_0\\
	&=\frac{x^{\alpha+1}}{\alpha+1}\sum_{r=0}^{\infty}(-1)^r\frac{(\alpha + 1)_r\,x^r}{(\alpha + 2)_r\,r!}\sum_{s=0}^{r}(-1)^s\binom{r}{s}\frac{(a)_s\,(b)_s}{(c)_s}, 
	\end{split}
\end{equation}
which in view of the identity
\begin{equation}
	(-r)_s=\frac{(-1)^s\,r!}{(r-s)!}, \qquad r > s,
\end{equation}
eventually yields 
	\begin{equation}\label{eq29b}
	\int x^\alpha \,e^{-dx} \,e^{\hat{\chi}x}\phi_0\,dx	=\frac{x^{\alpha+1}}{\alpha+1}\sum_{r=0}^{\infty}\frac{(\alpha + 1)_r\,x^r}{(\alpha + 2)_r\,r!}\,{_3{F}}_1[-r,a,b;c],\quad  0<x<1.		
\end{equation}

It is interesting to note that the proposed formalism leads to several other new and fascinating identities. We consider the following result:
\begin{thm}
	For the function ${_1{F}_2}[a;b,c;\beta x]$, the following integral representation holds:
		\begin{equation}\label{eq40}
		\int_{-\infty}^{\infty}e^{-\alpha x^2}\,{_1{F}_2}[a;b,c;\beta x]\,dx=		\sqrt{\frac{\pi}{\alpha}}\,{_2{F}}_4\left[\frac{a}{2},\frac{a+1}{2};\frac{b}{2},\frac{b+1}{2},\frac{c}{2},\frac{c+1}{2};\left(\frac{\beta^2}{16\alpha}\right)\right].
	\end{equation}	
\end{thm}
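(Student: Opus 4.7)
The plan is to treat ${}_1F_2[a;b,c;\beta x]$ as an elementary exponential in the umbral variable, then carry out an ordinary Gaussian integral and reinterpret the result. Concretely, by the recipe of equations \eqref{eq4}--\eqref{eq5} (applied to the $(1,2)$ family), I would write
\begin{equation*}
{}_1F_2[a;b,c;\beta x]=e^{\hat{\chi}\beta x}\phi_0,\qquad \hat{\chi}^{r}\phi_0=\frac{(a)_r}{(b)_r(c)_r}.
\end{equation*}
Substituting this into the integrand turns the problem into
\begin{equation*}
\int_{-\infty}^{\infty}e^{-\alpha x^{2}+\hat{\chi}\beta x}\,dx\,\phi_0,
\end{equation*}
which, treating $\hat{\chi}$ as a constant, is a standard Gaussian and equals $\sqrt{\pi/\alpha}\,\exp(\hat{\chi}^{2}\beta^{2}/(4\alpha))\,\phi_0$.

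Next I would expand this exponential as a power series and extract the action of the umbra,
\begin{equation*}
\sqrt{\frac{\pi}{\alpha}}\sum_{r=0}^{\infty}\frac{1}{r!}\left(\frac{\beta^{2}}{4\alpha}\right)^{r}\hat{\chi}^{2r}\phi_0
=\sqrt{\frac{\pi}{\alpha}}\sum_{r=0}^{\infty}\frac{1}{r!}\left(\frac{\beta^{2}}{4\alpha}\right)^{r}\frac{(a)_{2r}}{(b)_{2r}(c)_{2r}}.
\end{equation*}
The key manipulation now is the Pochhammer duplication identity $(d)_{2r}=4^{r}(d/2)_{r}((d+1)/2)_{r}$, applied once in the numerator and twice in the denominator. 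The powers of $4$ combine with $(4\alpha)^{-r}$ into $(16\alpha)^{-r}$, and reassembling the resulting six Pochhammer ratios into series form gives exactly the ${}_2F_4$ in the statement.

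Two small sanity checks belong in the write-up. First, the integrand is even as a function of the parity of $\beta x$ only after all odd powers die in the Gaussian moment integration; this is automatic because $\int x^{2r+1}e^{-\alpha x^{2}}dx=0$, which is precisely what is encoded by having only $\hat{\chi}^{2r}$ surviving in the exponential expansion. Second, convergence of the Gaussian integral demands $\operatorname{Re}(\alpha)>0$, and the resulting ${}_2F_4$ is entire in its argument, so the identity holds without further restriction on $\beta$.

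The only genuine obstacle is the Pochhammer bookkeeping in the duplication step: one must carefully track that the factor $4^{r}$ appears once on top and twice on the bottom to yield the net $16^{-r}$, and that the $r!$ from the exponential expansion must be preserved in the final series (rather than being absorbed into a Pochhammer). Everything else is an essentially mechanical application of the umbral replacement rule \eqref{eq5}, and no new identity beyond \eqref{eq3}--\eqref{eq6} and the duplication formula is required.
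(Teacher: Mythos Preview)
Your proposal is correct and follows essentially the same route as the paper: introduce the umbral operator with $\hat{\chi}^{r}\phi_0=(a)_r/((b)_r(c)_r)$, evaluate the resulting Gaussian to obtain $\sqrt{\pi/\alpha}\,e^{\hat{\chi}^{2}\beta^{2}/(4\alpha)}\phi_0$, expand, and apply the duplication identity $(d)_{2r}=4^{r}(d/2)_r((d+1)/2)_r$ to recover the ${}_2F_4$. The only differences are cosmetic (the paper writes the operator as ${}_1\hat{\chi}_2$ acting on $\psi_0$) and your added remarks on parity and convergence, which the paper omits.
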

\begin{proof}
Let us introduce a slightly different umbral operator with its related vacuum in the following form:
\begin{equation}\label{key6}
{}_1\hat{\chi}_2^n \;\psi_0=\dfrac{(a)_n}{(b)_n(c)_n}.
\end{equation}
Proceeding as in Section 1, we can write
\begin{equation*}
	\int_{-\infty}^{\infty}e^{-\alpha x^2}\,{_1{F}_2}[a;b,c;\beta x]\,dx=	\int_{-\infty}^{\infty}e^{-\alpha x^2}e^{\beta\;{}_1\hat{\chi}_2\; x}dx\;\psi_0 = \sqrt{\frac{\pi}{\alpha}}\,
	e^{\frac{\beta^2\;{{}_1\hat{\chi}_2}^2}{4\alpha}}\,\psi_0,
\end{equation*}	
	so that we have
	\begin{equation}\label{eq34}
\int_{-\infty}^{\infty}e^{-\alpha x^2}\,{_1{F}_2}[a;b,c;\beta x]\,dx=	\sqrt{\frac{\pi}{\alpha}}\,\sum_{r=0}^{\infty}\frac{1}{r!}\left(\frac{\beta^2}{4\alpha}\right)^r{{}_1\hat{\chi}_2}^{2r}\,\psi_0.
\end{equation}
In view of equation \eqref{eq5}, we have
\begin{equation}\label{eq38d}
		\int_{-\infty}^{\infty}e^{-\alpha x^2}\,{_1{F}_2}[a;b,c;\beta x]\,dx =\sqrt{\frac{\pi}{\alpha}}\,\sum_{r=0}^{\infty}\frac{(a)_{2r}}{(b)_{2r}\,(c)_{2r}}\dfrac{1}{r!}\left(\frac{\beta^2}{4\alpha}\right)^r.
\end{equation}

	By taking into account the following identity:
	\begin{equation}\label{eq38}
		(d)_{2r}=2^{2r}\left(\frac{d}{2}\right)_r\left(\frac{d+1}{2}\right)_r,
	\end{equation}
	we find
	\begin{equation}\label{eq39}
		\int_{-\infty}^{\infty}e^{-\alpha x^2}\,{_1{F}_2}[a;b,c;\beta x]\,dx=		\sqrt{\frac{\pi}{\alpha}}\,\sum_{r=0}^{\infty}\frac{\left(\frac{a}{2}\right)_{\!r}\,\left(\frac{a+1}{2}\right)_{\!r}}{\left(\frac{b}{2}\right)_{\!r}\,\left(\frac{b+1}{2}\right)_{\!r}\,\left(\frac{c}{2}\right)_{\!r}\,\left(\frac{c+1}{2}\right)_{\!r}}\dfrac{1}{r!}\left(\frac{\beta^2}{16\alpha}\right)^r.
	\end{equation}
Finally, considering the series definition of hypergeometric function, assertion \eqref{eq40} follows.
\end{proof}
	Further, we establish an integral in terms of Fox-Wright $\Psi$-functions \cite{Andrews}. This family of functions is specified by the following series:
	\begin{equation}\label{eq70}
		{}_p{\Psi}_q\begin{pmatrix}
			(a_1,A_1) & (a_2,A_2) & ...\,(a_p,A_p)\\
			& & & ;x\\
			(b_1,B_1) & (b_2,B_2) & ...\, (b_q,B_q)
		\end{pmatrix}
		=\sum_{r=0}^{\infty}\frac{\prod\limits_{k=1}^{p}\Gamma(a_k+rA_k)\,x^r}{\prod\limits_{s=1}^{q}\Gamma(b_s+rB_s)\,r!}.
	\end{equation}	
	According to the above definition, integral \eqref{eq38d} can be expressed as:

	\begin{equation}\label{eq72}
		\int_{-\infty}^{\infty}e^{-\alpha x^2}\, {_1{F}}_2[a,b;c;\beta x]\,dx	=\sqrt{\frac{\pi}{\alpha}}\,\frac{\Gamma(b)\Gamma(c)}{\Gamma(a)}\,	{_1{\Psi}}_2\begin{pmatrix}
			& (a,2)  & \\
			& & & ;\frac{\beta^2}{4\alpha}\\
			(b,2)  &  & (c,2) 
		\end{pmatrix}.
	\end{equation}
In view of equations \eqref{eq40} and \eqref{eq72}, we also obtain the following relation:
	\begin{equation}\label{eq72d}
		{_2{F}}_4\left[\frac{a}{2},\frac{a+1}{2};\frac{b}{2},\frac{b+1}{2},\frac{c}{2},\frac{c+1}{2};\left(\frac{\beta^2}{16\alpha}\right)\right]	=\frac{\Gamma(b)\Gamma(c)}{\Gamma(a)}\,	{_1{\Psi}}_2\begin{pmatrix}
			& (a,2)  & \\
			& & & ;\frac{\beta^2}{4\alpha}\\
			(b,2)  &  & (c,2) 
		\end{pmatrix}.
	\end{equation}
	This outcome provides insight into the adaptability and generality of the formalism developed so far. \\
	
The integral corresponding to equation \eqref{eq40} for the generalized hypergeometric function ${}_p{F}_q[a_1,a_2,...,a_p;b_1,b_2,...,b_q;\beta x]$ is obtained in the form of following result:
\begin{thm}\label{t1}
	For the function ${}_p{F}_q[a_1,a_2,...,a_p;b_1,b_2,...,b_q;\beta x]$, the following integral holds:
	\begin{align*}
		\int_{-\infty}^{\infty}&e^{-\alpha x^2}{}_p{F}_q[a_1,...,a_p;b_1,...,b_q;\beta x]\,dx=\\
		&\sqrt{\frac{\pi}{\alpha}}\,\,{}_{2p}F_{2q}\left[\frac{a_1}{2},\frac{a_1+1}{2},...,\frac{a_p}{2},\frac{a_p+1}{2};\frac{b_1}{2},\frac{b_1+1}{2},...,\frac{b_q}{2},\frac{b_q+1}{2};2^{2(p-q)}\left(\frac{\beta^2}{4\alpha}\right)\right],\quad p \le q+1.
		\end{align*}
\end{thm}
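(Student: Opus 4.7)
The plan is to run the same umbral scheme used in the ${}_1F_2$ case, only upgraded to an operator whose vacuum reproduces the full $(p,q)$--Pochhammer ratio. Accordingly, I would introduce an umbral operator ${}_p\hat{\chi}_q$ with companion vacuum $\psi_0$ satisfying
\begin{equation*}
{}_p\hat{\chi}_q^{\,n}\,\psi_0 \;=\; \frac{[a_p]_n}{[b_q]_n} \;=\; \frac{\prod_{k=1}^{p}(a_k)_n}{\prod_{s=1}^{q}(b_s)_n},
\end{equation*}
so that, in analogy with \eqref{eq4}, one has ${}_p F_q[a_1,\dots,a_p;b_1,\dots,b_q;\beta x] = e^{\beta\,{}_p\hat{\chi}_q\, x}\,\psi_0$.

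Next, I would treat ${}_p\hat{\chi}_q$ as an ordinary constant and carry out the Gauss integral, obtaining
\begin{equation*}
\int_{-\infty}^{\infty} e^{-\alpha x^2}\,e^{\beta\,{}_p\hat{\chi}_q\, x}\,dx\;\psi_0
\;=\; \sqrt{\tfrac{\pi}{\alpha}}\,\exp\!\left(\tfrac{\beta^2\,{}_p\hat{\chi}_q^{\,2}}{4\alpha}\right)\psi_0.
\end{equation*}
Expanding this exponential in a power series and applying the umbral rule gives
\begin{equation*}
\sqrt{\tfrac{\pi}{\alpha}}\sum_{r=0}^{\infty}\frac{1}{r!}\left(\tfrac{\beta^2}{4\alpha}\right)^{\!r}\,\frac{[a_p]_{2r}}{[b_q]_{2r}},
\end{equation*}
which is the exact $(p,q)$ analogue of \eqref{eq38d}.

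The key combinatorial step is then to apply the duplication identity \eqref{eq38} to every Pochhammer factor, both in the numerator and in the denominator. Each $(a_k)_{2r}$ contributes a factor $2^{2r}$, and each $(b_s)_{2r}$ in the denominator removes such a factor; the net balance is $2^{2(p-q)r}$, which combines with $(\beta^2/(4\alpha))^r$ to give the advertised argument $2^{2(p-q)}\beta^2/(4\alpha)$. After this regrouping, the series is recognised at once as the ${}_{2p}F_{2q}$ with the split parameters $a_k/2,(a_k+1)/2$ and $b_s/2,(b_s+1)/2$, establishing the theorem.

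The only real obstacles are bookkeeping rather than conceptual. First, one must track the powers of two carefully so that the exponent $2(p-q)$ appears (and in particular does not flip sign); this is where a sign or a factor of two can be easily mislaid. Second, the convergence caveat $p\le q+1$ must be justified: under this hypothesis the resulting ${}_{2p}F_{2q}$ has $2p\le 2q+2=(2q)+2$, so the series converges for all finite values of the argument, and the underlying ${}_pF_q$ is itself entire in $x$, making the Gaussian-weighted integral absolutely convergent and the interchange of sum and integral (which is implicit in the umbral manipulation) fully legitimate. Apart from this verification, the derivation is a line-by-line upgrade of the proof already given for \eqref{eq40}.
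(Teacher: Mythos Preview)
Your proposal is correct and is exactly the approach the paper has in mind: the paper states this theorem immediately after proving the ${}_1F_2$ case and gives no separate proof, leaving the generalisation to the reader along the very lines you describe (generalised umbral operator, Gaussian integral, series expansion, and termwise application of the duplication formula \eqref{eq38}).

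One small caveat: your convergence remark is not quite right at the boundary case $p=q+1$. There ${}_pF_q$ is not entire (it has unit radius), and the resulting ${}_{2p}F_{2q}$ has $2p=2q+2$, which is a formally divergent series, so the identity must be read either formally or under the stricter hypothesis $p\le q$. The paper does not address this either, so your argument is at least as careful as the original.
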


A further interesting integral can be obtained in the following example:
\begin{example}
\begin{equation}\label{es4}
\int_{-\infty}^\infty {}_1F_2[a;b,c;-\alpha x^2+\beta x]dx=\sqrt{\dfrac{\pi}{\alpha}}\sum_{r=0}^\infty \dfrac{(a)_{r-\frac{1}{2}}}{(b)_{r-\frac{1}{2}}(c)_{r-\frac{1}{2}}}\dfrac{1}{r!}\left( \dfrac{\beta^2}{4\alpha}\right)^r. 
\end{equation}	
The relevant proof is achieved by setting
\begin{equation*}
\int_{-\infty}^\infty {}_1F_2[a;b,c;-\alpha x^2+\beta x]\,dx=\int_{-\infty}^\infty e^{-\alpha\;{}_1\hat{\chi}_2\;x^2 }e^{\beta\;{}_1\hat{\chi}_2\;x}dx\;\psi_0,
\end{equation*}
from which, it follows that
\begin{equation}\label{key}
\int_{-\infty}^\infty {}_1F_2[a;b,c;-\alpha x^2+\beta x]\,dx= \sqrt{\dfrac{\pi}{\alpha}}\;{}_1\hat{\chi}_2^{-\frac{1}{2}}e^{{}_1\hat{\chi}_2\;\frac{\beta^2}{4\alpha}}\psi_0.
\end{equation}
		
Expanding the operator ${}_1\hat{\chi}_2$, integral \eqref{es4} is obtained.
\end{example}

The above results can be extended by using certain special cases of hypergeometric functions. These special cases are considered in the next section.

\section{Applications}

We consider the following special cases of hypergeometric functions:\\

{\bf Case I.~Confluent hypergeometric function}\\

We focused on this particular case because the confluent hypergeometric function plays a pivotal role in physics. For example, these functions are often employed in the study of bound state problems for the simple harmonic oscillator in one, two, and three dimensions; the Coulomb problem in two and three dimensions and the cartesian one dimensional Morse potential problem. All the aforementioned problems can be solved in terms of confluent hypergeometric function (more specifically in terms of Kummer's function) \cite{Andrews}. Indeed, the radial function for Landau states can be written as \cite{Mathews}:
\begin{equation}\label{eq46}
	\mathcal{R}(\xi)=e^{-\frac{\xi}{2}}\,{\xi}^{\frac{|m_l|}{2}}\,{_1{F}}_1[a;b;\xi],
\end{equation}
where
\begin{equation}\label{eq47}
	a:=-\lambda+\frac{|m_l|+1}{2}, \qquad b:=1+|m_l|, \quad m_l\in\mathcal{R},
\end{equation}
$m_l$ is a real parameter characteristic of the physical problem.\\
Without elaborating the physical significance, it is apparent that in umbral form, we may write
\begin{equation}\label{eq48}
	\mathcal{R}(\xi)={\xi}^{\frac{|m_l|}{2}}\,e^{-\left(\frac{1}{2}-\;{}_1\hat{\chi}_1\right)\xi}\,\gamma_0, \qquad \qquad {}_1\hat{\chi}_1^n \gamma_0=\dfrac{(a)_n}{(b)_n}.
\end{equation}
In view of Theorem \ref{t1}, we have the following result for the confluent hypergeometric function:
\begin{cor}
	For the confluent hypergeometric function ${_1{F}}_1[a;b;\beta x]$, the following integral holds:
	\begin{equation}\label{eq49}
			\int_{-\infty}^{\infty}e^{-\alpha x^2}{_1{F}}_1[a;b;\beta x]\,dx=
			\sqrt{\frac{\pi}{\alpha}}\,{_{2}F}_{2}\left[\frac{a}{2},\frac{a+1}{2};\frac{b}{2},\frac{b+1}{2};\left(\frac{\beta^2}{4\alpha}\right)\right].	
	\end{equation}
\end{cor}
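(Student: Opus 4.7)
The plan is to recognize the corollary as the $p=q=1$ specialization of Theorem~\ref{t1}. With one numerator parameter $a_{1}=a$ and one denominator parameter $b_{1}=b$, the prefactor $2^{2(p-q)}$ appearing in the argument of the hypergeometric function on the right-hand side of Theorem~\ref{t1} collapses to $2^{0}=1$, and ${}_{2p}F_{2q}$ becomes ${}_{2}F_{2}$ with numerator parameters $a/2,(a+1)/2$ and denominator parameters $b/2,(b+1)/2$. This already matches the right-hand side of \eqref{eq49}, so strictly speaking no further work is needed once Theorem~\ref{t1} is in hand.

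For a self-contained reading I would also repeat, in miniature, the umbral argument used to prove \eqref{eq40}. Introduce the operator ${}_{1}\hat{\chi}_{1}$ with vacuum $\gamma_{0}$ satisfying ${}_{1}\hat{\chi}_{1}^{\,n}\,\gamma_{0}=(a)_{n}/(b)_{n}$, so that ${_1F_1}[a;b;\beta x]=e^{\beta\,{}_{1}\hat{\chi}_{1}\,x}\gamma_{0}$. Treating ${}_{1}\hat{\chi}_{1}$ as an ordinary scalar and completing the square under the Gauss integral yields
\begin{equation*}
\int_{-\infty}^{\infty}e^{-\alpha x^{2}}\,{_1F_1}[a;b;\beta x]\,dx=\sqrt{\frac{\pi}{\alpha}}\;e^{\beta^{2}\,{}_{1}\hat{\chi}_{1}^{\,2}/(4\alpha)}\,\gamma_{0}=\sqrt{\frac{\pi}{\alpha}}\sum_{r=0}^{\infty}\frac{(a)_{2r}}{(b)_{2r}\,r!}\left(\frac{\beta^{2}}{4\alpha}\right)^{\!r}.
\end{equation*}
Applying the duplication identity \eqref{eq38} to both $(a)_{2r}$ and $(b)_{2r}$, the factors $2^{2r}$ coming from numerator and denominator cancel exactly, leaving precisely the ${}_{2}F_{2}$ series on the right-hand side of \eqref{eq49}.

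I do not anticipate a real obstacle. The single point requiring care is the balance of powers of $2$: the cancellation is automatic in the present balanced case, which is exactly the reason the factor $2^{2(p-q)}$ of Theorem~\ref{t1} disappears when $p=q$. The natural convergence conditions $\operatorname{Re}(\alpha)>0$ and compatibility of $\beta$ with the growth of ${}_1F_1$ are assumed implicitly, as elsewhere in the paper.
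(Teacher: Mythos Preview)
Your proposal is correct and matches the paper's approach exactly: the paper presents this corollary immediately after Theorem~\ref{t1} with the phrase ``In view of Theorem~\ref{t1}'', i.e.\ as the $p=q=1$ specialization, and your optional self-contained umbral derivation mirrors the proof of \eqref{eq40} used upstream. Your observation that the $2^{2(p-q)}$ factor collapses to $1$ is precisely why the argument on the right-hand side is $\beta^{2}/(4\alpha)$ rather than $\beta^{2}/(16\alpha)$.
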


\vspace{0.3cm}

Integrals involving the function  $\mathcal{R}(\xi)$ are of similar type as those mentioned in equations \eqref{eq29}-\eqref{eq29b}. The umbral restyling in equation \eqref{eq48} can consequently be effective for calculating overlapping integrals between distinct Landau states.\\

{\bf Case II.~Hypergeometric representation of Bessel functions}\\

The $0$th order cylindrical Bessel function of first kind can be written in terms of hypergeometric function as \cite{Andrews}:
\begin{equation}\label{eq50}
	J_0(x)={_0{F}}_1\left(- ;1;-\frac{x^2}{4}\right)=\sum_{r=0}^{\infty}\frac{(-1)^r}{(r!)^2}\left(\frac{x}{2}\right)^{2r}.
\end{equation}
The relevant umbral representation is same as used in previous publications, see, for example \cite{SLicciardi}. Consider the $0$th order Tricomi function \cite{Tricomi}
\begin{equation}\label{eq51}
	C_0(x)=J_0(2\sqrt{x})={_0{F}}_1[-;1;-x]=\sum_{r=0}^{\infty}\frac{(-x)^r}{(r!)^2}.
\end{equation}
Accordingly, applying Theorem \ref{t1}, it follows that
\begin{equation}\label{eq52}
\int_{-\infty}^{\infty}e^{-\alpha x^2}C_0(\beta x)\,dx=\int_{-\infty}^{\infty} e^{-\alpha x^2}\,{_0{F}}_1[-;1;-\beta x]\,dx=\sqrt{\frac{\pi}{\alpha}}\,{_0{F}}_2\left[-;\frac{1}{2},1;\left(\frac{\beta^2}{16\alpha}\right)\right].
\end{equation}

 \vspace{0.30cm}
 
{\bf Case III.~Circular functions and hypergeometric functions}\\

The hypergeometric representation of cosine function is given as:
\begin{equation}\label{eq53}
	\cos(x)={_0{F}}_1\left[-;\frac{1}{2};-\frac{x^2}{4}\right].
\end{equation}
As a consequence of previously derived identities we obtain the following integral for the cosine function:
\begin{equation}\label{eq54}
	\int_{-\infty}^{\infty}e^{-\alpha x^2}\cos(\sqrt{\beta x})=\sqrt{\frac{\pi}{\alpha}}\,{_0{F}}_2\left[-;\frac{1}{4},\frac{3}{4};\left(\frac{\beta^2}{2^8\alpha}\right)\right].
\end{equation}

\vspace{0.3cm}

The above two equations are important within the umbral context developed here. In fact, prior works have addressed the problem of incorporating trigonometric functions into umbral formalism. The connection to the current formalism will be covered in the last section as concluding remarks.\\

\newpage

{\bf Case IV.~Geometric series and hypergeometric functions}\\

The hypergeometric representation for the geometric series is given by
\begin{equation}\label{eq55}
	\frac{1}{1+z^2}={_1{F}}_0[1;-;-z^2].
\end{equation}
The following umbral operator is introduced:
\begin{equation}\label{opP}
{}_1{}\hat{\chi}_0^n \; p(b)_0:={}^b\hat{\chi}^n p_0=(b)_n,
\end{equation}
so that, we can write
\begin{equation}\label{eq56}
{}_1{F}_0[1;-;-z^2]=e^{-\;{}^1\hat{\chi}\;z^2}\,p_0.
\end{equation}
According to the formalism discussed so far, the following integral is easily obtained:
\begin{equation}\label{eq58}
	\int_{-\infty}^{\infty}{_1{F}}_0[1;-;-z^2]\,dz=\sqrt{\pi}\; {}^1\hat{\chi}^{-\frac{1}{2}}\,p_0=\pi.
\end{equation}
Further, we derive the following integral:
\begin{equation}\label{key3}
	\int_{-\infty}^{\infty}{_1{F}}_0[1;-;-\alpha z^2+ \beta z]\,dz=\dfrac{\pi}{\sqrt{\alpha\left( 1-\frac{\beta^2}{4\alpha} \right) }}\,.
\end{equation}
Using the method outlined above, we have
\begin{equation*}
\int_{-\infty}^{\infty}{_1{F}}_0[1;-;-\alpha z^2+ \beta z]\,dz =
		\sqrt{\frac{\pi}{{}^1\hat{\chi}\; \alpha}} e^{{}^1\hat{\chi}\frac{\beta^2}{4\alpha}}\,p_0 = \sqrt{\dfrac{\pi}{\alpha}}\sum_{r=0}^\infty \dfrac{{}^1\hat{\chi}^{r-\frac{1}{2}}}{r!}\left(\dfrac{\beta^2}{4\alpha} \right)^r p_0,
\end{equation*}
which simplifies to
\begin{equation}\label{eq59}
\int_{-\infty}^{\infty}{_1{F}}_0[1;-;-\alpha z^2+ \beta z]	 =\sqrt{\frac{\pi}{\alpha}}\sum_{r=0}^{\infty}\frac{(1)_{r-\frac{1}{2}}}{r!}\left(\frac{\beta^2}{4\alpha}\right)^r.
\end{equation}
In view of equation \eqref{eq9}, we have
\begin{equation}\label{key1}
(1)_{r-\frac{1}{2}}=(1)_{-\frac{1}{2}}\left(1-\frac{1}{2} \right)_r=\sqrt{\pi} \left(\frac{1}{2} \right)_r.
\end{equation}
Using equation \eqref{key1} in the r.h.s. of equation \eqref{eq59}, we find
\begin{equation}\label{key2}
\int_{-\infty}^{\infty}{_1{F}}_0[1;-;-\alpha z^2+ \beta z]\,dz=\dfrac{\pi}{\sqrt{\alpha}}\;{}_1F_0\left[ \dfrac{1}{2};\;\;;\dfrac{\beta^2}{4\alpha}\right],
\end{equation}
which yields integral \eqref{key3}.\\

According to the current perspective, usage of the ideas connected with Borel transform \cite{OpervsUM} is a crucial concept that has been discussed previously in relation to the rigorous foundations of umbral calculus. Even though it requires some more work, the integral transform approach is a potent instrument for many of the findings so far.
The umbral approach combined with the integral representation of hypergoemetric function is a handy tool for obtaining further results. It is widely acknowledged that the integral representation of confluent hypergeometric function writes \cite{Andrews}

	\begin{equation}\label{eq62}
	{_1{F}}_1[a;c;x]=\frac{\Gamma(c)}{\Gamma(a)\Gamma(c-a)}\int_{0}^{1} t^{a-1} (1-t)^{c-a-1} e^{xt}\,dt.
	\end{equation}
	
	In accordance with integral \eqref{eq49}, we have
	
\begin{equation}\label{eq63}
			\int_{-\infty}^{\infty} e^{-\alpha x^2}{_1{F}}_1[a;b;\beta x]\,dx =\frac{\Gamma(b)}{\Gamma(a)\Gamma(b-a)}\int_{0}^{1} t^{a-1} (1-t)^{b-a-1} \left(\int_{-\infty}^{\infty}e^{-\alpha x^2 + \beta xt}\,dx\right)\,dt,	
		\end{equation}
			which further simplifies to
	\begin{equation}
			\int_{-\infty}^{\infty} e^{-\alpha x^2}{_1{F}}_1[a;b;\beta x]\,dx=\sqrt{\frac{\pi}{\alpha}}\frac{\Gamma(b)}{\Gamma(a)\Gamma(b-a)}\int_{0}^{1} t^{a-1} (1-t)^{b-a-1}e^{\frac{\beta^2}{4\alpha}t^2}\,dt.	
\end{equation}

	Comparison between equations \eqref{eq49} and \eqref{eq63} eventually yields
	
	\begin{equation}\label{eq63a}
		{_2{F}}_2\left[\frac{a}{2},\frac{a+1}{2};\frac{b}{2},\frac{b+1}{2};\frac{\beta^2}{4\alpha}\right]=\frac{\Gamma(b)}{\Gamma(a)\Gamma(b-a)}\int_{0}^{1} t^{a-1} (1-t)^{b-a-1}e^{\frac{\beta^2}{4\alpha}t^2}\,dt.
	\end{equation}
	
	\vspace{0.25cm}
	
	As another illustration, we establish the following integral representation for hypergeometric function:
		\begin{equation}\label{eq64}
			{_2{F}}_1[a,b;c;x]=\frac{\Gamma(c)}{\Gamma(a)\Gamma(c-a)}\int_{0}^{1} t^{a-1} (1-t)^{c-a-1}\,{_1{F}}_0[b;-;xt]\,dt.
		\end{equation}

	By using umbral operator \eqref{opP},
		we can write
		\begin{equation}\label{eq65}
			{}_2{F}_1[a,b;c;x]={}_1{F}_1[a;c;{}^b\hat{\chi}\;x]p_0,
		\end{equation}
which in view of integral \eqref{eq62} takes the form

\begin{equation}\label{eq67}
	{}_2{F}_[(a,b;c;x]=	\frac{\Gamma(c)}{\Gamma(a)\Gamma(c-a)}\int_{0}^{1} t^{a-1} (1-t)^{c-a-1} \,e^{{}^b\hat{\chi}\;xt}\,dt\,p_0.
\end{equation}
Again using equation \eqref{opP}, we find
\begin{equation}
		{}_2{F}_1[a,b;c;x]= \frac{\Gamma(c)}{\Gamma(a)\Gamma(c-a)}\sum_{r=0}^{\infty}\frac{1}{r!}\,\int_{0}^{1} t^{a-1} (1-t)^{c-a-1} (b)_r\,(xt)^r\,dt,
\end{equation}
which consequently yields integral \eqref{eq64}.\\

 In the next section, we consider the embedding of theory of hypergeometric differential equations within the umbral formalism.

\section{Umbral methods and hypergeometric differential equations}	
From earlier considerations, it may be evident that the construction of hypergeometric functions is modular. These functions may be built by adding (or removing) sections that include Pochhammer terms and by altering the punctuations accordingly, as for example ${_1{F}}_1(a;c;x) \rightarrow {_2{F}}_1(a,b;c;x)$. The structure of the corresponding differential equations ought to exhibit the same pattern.\\

In the past, the idea of Laguerre derivative \cite{ SLicciardi,Datto,Cacao} was proposed in relation to the investigation of the monomiality principle. According to this formalism, the $0$th  order Tricomi function $C_0(x)$, is an eigen function of Laguerre derivative, namely	
	
	\begin{equation}\label{eq73}
		{_L{\hat{D}}}_x\,C_0(\lambda x)=-\lambda C_0(\lambda x),
	\end{equation}
where
\begin{equation}\label{eq74}
		{_L{\hat{D}}}_x:=\frac{d}{dx}x\frac{d}{dx}.
\end{equation}	
	Keeping into account that
	\begin{equation}\label{eq75}
		{_L{\hat{D}}}_x \rightarrow \left(1+x\frac{d}{dx}\right)\frac{d}{dx},	
	\end{equation}
we find
\begin{equation}\label{eq76}
	C_0(x) \rightarrow {_0{F}}_1[;1;-x].
\end{equation}	

\vspace{0.30cm}

	Moreover, the $\nu$th order Tricomi functions are specified by
	\begin{equation}\label{eq77}
	C_{\nu}(x)=\sum_{r=0}^{\infty} \frac{(-x)^r}{r!\,\Gamma(\nu +r +1)}
	\end{equation}
	and the associated eigenvalue equation reads
	\begin{equation}\label{eq78}
		\left(\frac{d}{dx}x\frac{d}{dx}+\nu \frac{d}{dx}\right)C_\nu(\lambda x)=-\lambda C_\nu(\lambda x).
	\end{equation}
	The link between Tricomi functions and hypergeometric functions is provided by
	\begin{equation}\label{eq79}
		C_{b-1}(x)=\frac{1}{\Gamma(b)}\,{_0{F}}_1[-;b;-x].
	\end{equation}
	Eventually, equation \eqref{eq78} can be written as:
	\begin{equation}\label{eq80}
			\left(x\frac{d}{dx}+b\right)\frac{d}{dx}\,{_0{F}}_1[-;b+1;-\lambda x]=\lambda\,{_0{F}}_1[-;b+1;-\lambda x].
	\end{equation}
Using the umbral point of view, we introduce the following operator:
 \begin{equation}\label{key7}
 	{}_0\hat{\chi}_1^n\;\varepsilon(b)_0:={}_b\hat{\chi}^n\varepsilon_0=\frac{1}{(b)_n}
 \end{equation}
  and the differrential operator as follows:
 \begin{equation}\label{key5}
 \dfrac{d}{d\;{}_b\hat{\chi}\;x}:={}_b\overline{D}_x.
 \end{equation}
 Accordingly, the eigenvalue problem \eqref{eq80} can be expressed as:
	\begin{equation}\label{eq81}
{}_b\overline{D}_x\;e^{\lambda\;{}_b\hat{\chi}\;x}\varepsilon_0=\lambda 	e^{\lambda\;{}_b\hat{\chi}\;x}\varepsilon_0.
	\end{equation}
	Comparison of equations \eqref{eq80} and \eqref{eq81}, yields the following correspondence:
	\begin{equation}\label{eq84}
{}_b\overline{D}_x	\rightarrow \left(x\frac{d}{dx}+b\right)\frac{d}{dx},
		\end{equation}
	which will be used to derive the differential equation satisfied by ${_1{F}}_1[a;b;x]$. Next, we consider the following umbral form:
	\begin{equation}\label{eq85}
{_1{F}}_1[a;b;x]=e^{{}^a\hat{\chi}\;{}_b\hat{\chi}\;x}\,p_0\,\varepsilon_0,
\end{equation}
which in view of notation \eqref{key5} can be written as:
	\begin{equation}\label{eq87}
	{}_b\overline{D}_x\, e^{{}^a\hat{\chi}\;{}_b\hat{\chi}\;x}\,p_0\varepsilon_0=
{}^a\hat{\chi}\,e^{{}^a\hat{\chi}\;{}_b\hat{\chi}\;x}\,p_0\,\varepsilon_0.
\end{equation}
	In non umbral terms, we obtain
	\begin{equation}\label{eq88}
	\left(x\frac{d}{dx}+b\right)\frac{d}{dx}\,{_1{F}}_1[a;b;x]=a\,{_1{F}}_1[a+1;b;x].
	\end{equation}
Again, by noting that
	\begin{equation}\label{eq89}
		a\,{_1{F}}_1[a+1;b;x]=\left(x\frac{d}{dx}+a\right)\,{_1{F}}_1[a;b;x],
	\end{equation}
	we find
	\begin{equation}\label{eq90}
		\left(x\frac{d}{dx}-x+b\right)\frac{d}{dx}\,{_1{F}}_1[a;b;x]=a\,{_1{F}}_1[a;b;x].
	\end{equation}
	
	\vspace{0.3cm}
	
Using the same methodology, it is possible to produce the differential equations satisfied by higher order hypergeometric functions. Indeed, we get
	\begin{equation}\label{eq91}
	{}_2{F}_1(a,b;c;x)=\exp\{{}^a\hat{\chi}\;{}^b\hat{\chi}\;{}_c\hat{\chi}\;x\}\,p_{0,a}\,p_{0,b}\,\varepsilon_{0,c}
		\end{equation}
and	
\begin{equation}\label{eq92}
	{}_c\overline{D}_x\,\exp\{{}^a\hat{\chi}\;{}^b\hat{\chi}\;{}_c\hat{\chi}\;x\}\,p_{0,a}\,p_{0,b}\,\varepsilon_{0,c}=
	{}^a\hat{\chi}\;{}^b\hat{\chi}\;
\exp\{{}^a\hat{\chi}\;{}^b\hat{\chi}\;{}_c\hat{\chi}\;x\}\,p_{0,a}\,p_{0,b}\,\varepsilon_{0,c}
\end{equation}	

	\vspace{0.3cm}
	
	The above equations lead us to the following differential equation:
\begin{equation}\label{eq93}
	\left(x\frac{d}{dx}+c\right)\frac{d}{dx}\,{_2{F}}_1[a,b;c;x]=\left(x\frac{d}{dx}+a\right)\left(x\frac{d}{dx}+b\right)\,{_2{F}}_1[a,b;c;x],	
\end{equation}
	which can eventually be cast in the \textquotedblleft canonical\textquotedblright ~form of Gauss hypergeometric function as: 
\begin{equation}\label{eq94}
	\left(x\frac{d}{dx}+a\right)\left(x\frac{d}{dx}+b\right)-	\left(x\frac{d}{dx}+c\right)\frac{d}{dx} \rightarrow x(1-x)\frac{d^2}{dx^2}-[c-(a+b+1)x]\frac{d}{dx}+ab.
\end{equation}
	We have envisaged in previous sections that the umbral modular nature of hypergeometric function has impact on the relevant recurrences and differential equations. Further, we note that
	\begin{equation}\label{eq95}
	\prod\limits_{j=1}^{p}\; {}^{a_j}\hat{\chi}\;p_{0,j}= 	\prod\limits_{j=1}^{p}\left( x\frac{d}{dx} + a_j \right),
	\end{equation}
	\begin{equation}\label{eq96}
	\frac{d}{d\left(\prod\limits_{r=1}^{q}
		{}_{b_r}\hat{\chi}
		\,x\;\varepsilon_{0,r}\right)}=\prod\limits_{r=1}^{q}\left(x\frac{d}{dx}+b_r\right)\frac{d}{dx}.
	\end{equation}

Also, we note that identity \cite{Andrews}:
	\begin{equation}\label{eq97}
	\left( J_0(x)\right) ^2={}_1{F}_2\left[\frac{1}{2};1,1;-x^2\right],
	\end{equation}
can be written in umbral form as follows:
	\begin{equation}
		\left( J_0(x)\right) ^2	=
		\exp\{-\left( {}^{\frac{1}{2}}\hat{\chi}\right) \left( {}_1\hat{\chi}\right) \left( {}_{1'}\hat{\chi}\right) x^2\}\,p_0\;\varepsilon_{0,1}\;\varepsilon_{0,1'}.
	\end{equation}
	Here, the prime indicates an umbral operator acting on the vacuum $\varepsilon_{0,1'}$. The structure of prime and unprime umbral quantities is same, but considered as separate entities.\\

Consequently, the integrals involving square of Bessel functions, can be expressed as:
\begin{equation}
\int_{-\infty}^\infty e^{-\alpha x^2}\left(J_0\left(\sqrt{x} \right)  \right)^2dx=\sqrt{\dfrac{\pi}{\alpha}}
\exp\left\lbrace \dfrac{1}{4\alpha}\left[ \left( {}^{\frac{1}{2}}\hat{\chi}\right) \left( {}_1\hat{\chi}\right) \left( {}_{1'}\hat{\chi} \right) \right]  ^2\right\rbrace  \,p_0\;\varepsilon_{0,1}\;\varepsilon_{0,1'},
\end{equation}
which on simplification yields
\begin{equation}
\int_{-\infty}^\infty e^{-\alpha x^2}\left(J_0\left(\sqrt{x} \right)  \right)^2dx= \dfrac{1}{\sqrt{\alpha}}\sum_{r=0}^\infty \dfrac{\Gamma\left( \frac{1}{2}+2r\right) }{(2r)!^2r!}\left(\dfrac{1}{4\alpha} \right)^r.  
\end{equation}	
The relevant expression in terms of Fox-Wright function can be written as:
	\begin{equation}
		\int_{-\infty}^\infty e^{-\alpha x^2}\left(J_0\left(\sqrt{x} \right)  \right)^2dx=\frac{1}{\sqrt{\alpha}}	{_1{\Psi}}_2\begin{pmatrix}
			& (1/2,2)  & \\
			& & & ;\frac{1}{4\alpha}\\
			(1,2)  &  & (1,2) 
		\end{pmatrix}.
	\end{equation}
	
	In the next section, we consider extensions of the umbral approach to other functions. Certain important concluding remarks for further investigations will also be discussed.
	
	\section{Concluding remarks}
	In previous sections, we have covered different aspects of the umbral formalism and their relevance to hypergeometric functions. We have seen that umbral formalism seems to be tailored suited to simplify the associated computational technicalities. In this section, we further extend the formalism to explore the umbral hypergeometric versions of circular functions. As an illustration, we consider relation \eqref{eq53}. According to the proposed formalism , we can write the cosine differential equation as follows:
	\begin{equation}\label{eq100}
		\left(y\frac{d}{dy}+\frac{1}{2}\right)\frac{d}{dy}\,{_0{F}}_1\left[-;\frac{1}{2};y\right]={_0{F}}_1\left[-;\frac{1}{2};y\right],
	\end{equation}
	where $y:=-\frac{x^2}{4}$.
	After standard computations, equation \eqref{eq100} yields
	\begin{equation}\label{eq101}
	\frac{d^2}{dx^2}\,{_0{F}}_1\left[-;\frac{1}{2};-\frac{x^2}{4}\right]=-{_0{F}}_1\left[-;\frac{1}{2};-\frac{x^2}{4}\right].
	\end{equation}
The umbral version of the cosine function is given as \cite{SLicciardi}:
	\begin{equation}\label{eq102}
	\cos(x)=e^{-{}_{2,1}\hat{d}\;x^2}\,g_0, \qquad {}_{\alpha,\beta}\hat{d}^k g_0=\dfrac{\Gamma(k+1)}{\Gamma(\alpha k+\beta)}.
   \end{equation}
	 Accordingly, the umbral version of sine function is expressed as:
	\begin{equation}\label{eq104}
	\sin(x)=2x\; {}_{2,1}\hat{d}\; e^{-{}_{2,1}\hat{d}\;x^2}\,g_0,
	\end{equation}
	whose hypergeometric analogue writes
		\begin{equation}\label{eq105}
\sin(x)=x\,{_0{F}}_1\left[-;\frac{3}{2};-\frac{x^2}{4}\right].
\end{equation}
	The potential for creating a seamless transition between elementary and higher transcendental functions has attracted interest for the umbral restyling of circular functions. It has been established that such a context permits the replacement of trigonometric and spherical Bessel functions in a similar manner. Therefore, it is important to investigate the connection between circular and hypergeometric functions.\\
	
	Before closing the article, we underline the following two important observations:\\	
	
	{\bf (i)}~Using basic Gaussian identities, it is possible to express the integral transform of the cosine function in the following way:
\begin{equation}\label{eq106}
	\cos(x)=\frac{1}{\sqrt{\pi}}\int_{-\infty}^{\infty}e^{-\xi^2}e^{2i\sqrt{{}_{2,1}\hat{d}}\;\xi x}g_0=\frac{1}{\sqrt{\pi}}\int_{-\infty}^{\infty}e^{-\xi^2}\cos_{\frac{1}{2}}(2i\xi x)\,d\xi,
\end{equation}
where $$\cos_{\frac{1}{2}}(x)=\sum\limits_{r=0}^{\infty}\frac{\Gamma\left(\frac{r}{2}+1\right)}{(r!)^2}\,x^r,$$ whose association with Bessel like functions will be discussed later. The same result can be obtained by making the use of hypergeometric formulation.\\

{\bf (ii)}~We have discussed so far the case of single variable hypergeometric function. Their multi-variable extension is important from the point of view of applications. Even though the full discussion will be presented elsewhere, here we outline the lines for forthcoming treatment. The two-variable App\'{e}ll-Kamp\'{e}-d\'{e}-F\'{e}ri\'{e}t  functions \cite{Appel} defined by the series:
\begin{equation}\label{eq107}	
	F(\alpha;\gamma;\beta;\beta';x,y)=\sum\limits_{m,n=0}^{\infty}\frac{(\alpha)_{m+n}\,(\beta)_{m}\,x^m\,y^n}{(\gamma)_{m+n}\,(\beta')_n\,m!\,n!},
\end{equation}
can be expressed in the umbral form as:
\begin{equation}\label{eq108}		
F(\alpha;\gamma;\beta;\beta';x,y)=e^{\hat{A}\hat{B}x}e^{\hat{A}/\hat{B'}y}\,\phi_0.
\end{equation}	
	In a forthcoming investigation, we  will use and generalize the formalism developed so far to establish the umbral theory of the multi-variable hypergeometric functions. This will enable us to draw further consequences, complementing those obtained in this article, by using umbral definition of multi-variable hypergeometric functions.\\

In this article, we have shown that the umbral procedure employed to the theory of hypergeometric functions
and to the relevant generalizations offers significant advantages, which, in turn
yields non secondary advancements from the computational point of view.
Reanalyzing the properties of the Pochhammer symbol, we like to mention that
most of the results established in this article are based on the computations related to these mathematical tools. A central role is played by such type of tools in advanced methods for the evaluation of integrals involving special
functions, as for example, the method of brackets \cite{Gonzales}. In a forthcoming study, we will discuss
in details the link between the two methods along with a possible merging.\\
	
{\bf Conflict of Interest:} \\
The authors declare that they have no conflict of interest.\\

		\textbf{Acknowledgments:} \\		
Dr. Silvia Licciardi was supported by the following project:\\
\indent Funder: Project funded under the National Recovery and Resilience Plan (NRRP), Mission 4 Component 2 Investment 1.3 - Call for tender No. 1561 of 11.10.2022 of Ministero dell\'~Universit\`a  e della Ricerca (MUR); funded by the European Union -- NextGenerationEU.\\		 
		 \indent Award Number: Project code PE0000021, Concession Decree No. 1561 of 11.10.2022 adopted by Ministero dell\'~Universit\`a e della Ricerca (MUR), CUP -  Project title: Network 4 Energy Sustainable Transition -- NEST.

		 \newpage

	\end{document}